\newcommand{\mR}{\mathbb{R}}
\newcommand{\mC}{\mathbb{C}}
\newcommand{\mcR}{\mathcal{R}}
\newcommand{\mcS}{\mathcal{S}}
\newcommand{\mcM}{\mathcal{M}}
\newcommand{\mcC}{\mathcal{C}}
\newcommand{\mcP}{\mathcal{P}}
\newcommand{\mcV}{\mathcal{V}}
\newcommand{\mcW}{\mathcal{W}}
\newcommand{\mN}{\mathbb{N}}
\newcommand{\mS}{\mathbb{S}}
\newcommand{\mV}{\mathbb{V}}
\newcommand{\ux}{\underline{x}}
\newcommand{\up}{\underline{\partial}}
\newcommand{\uu}{\underline{u}}
\newcommand{\eps}{\epsilon}
\newcommand{\so}{\mathfrak{so}}
\DeclareMathOperator{\Ker}{Ker}
\DeclareMathOperator{\Spin}{Spin}
\newcommand{\DD}[2]{D_{#1,#2}}
\newcommand{\TW}[2]{T_{#1,#2}}
\newcommand{\PA}[2]{G_{#1,#2}}
\newcommand{\RS}[1]{R_{#1}}
\newcommand{\LA}[1]{\Delta_{#1}}
\newcommand{\LE}[2]{|{#1},{#2}|}
\newcommand{\LAg}{\mathfrak{g}}
\newcommand{\LAh}{\mathfrak{h}}
\theoremstyle{plain}
\newtheorem{theorem}{Theorem}
\newtheorem{lemma}{Lemma}
\newtheorem{corollary}{Corollary}
\newtheorem{definition}{Definition}
\theoremstyle{remark}
\begin{document}
\bibliographystyle{abbrv}
\title{Factorization of Laplace operators on higher spin representations}

\author{David Eelbode} 
\address{David Eelbode \\ 
Department of Mathematics and Computer Science \\
University of Antwerp \\
Middelheimlaan 1, 2020 Antwerpen, Belgium}
\email{David.Eelbode@ua.ac.be}
\author{
Dalibor \v{S}m\'{\i}d}
\address{Dalibor \v{S}m\'{\i}d \\ Mathematical Institute \\ Charles University Prague \\ Sokolovska 83, 18000 Praha 8, Czech Republic}
\email{smid@karlin.mff.cuni.cz}

\date{\today}

\begin{abstract}
This paper deals with the problem of factorizing integer powers of the Laplace operator acting on functions taking values in higher spin representations. This is a far-reaching generalization of the well-known fact that the square of the Dirac operator is equal to the Laplace operator. Using algebraic properties of projections of Stein-Weiss gradients, i.e. generalized Rarita-Schwinger and twistor operators, we give a sharp upper bound on the order of polyharmonicity for functions with values in a given representation with half-integral highest weight.
\end{abstract}

\keywords{Clifford algebras, Dirac operators, Rarita-Schwinger operator, Laplace operator}

\subjclass[2010]{30G35, 22E46, 31A30}

\thanks{The second author was supported by the grant GA 201/08/0397 and the research plan MSM 0021620839 of the Ministry of Education of the Czech Republic.}

\maketitle

\section{Introduction}

Clifford analysis provides a generalization of complex analysis in the plane to a higher-dimensional setting, in which the role of the Cauchy-Riemann operator is played by the Dirac operator. It is centered around the study of functions on the vector space $\mR^m$ taking values in its Clifford algebra or the corresponding spinor representation, see e.g. \cite{Brackx1982,Delanghe1992} for the standard references. Let $\{e_1,\ldots,e_m\}$ denote an orthonormal basis for the Euclidean vector space $\mR^m$. The Clifford algebra $\mC_m$ of the complexified vector space $\mC^m$ is generated by the multiplicative relations $e_ie_j+e_je_i=-2\delta_{ij}$ and has the structure of a graded vector space $\mC_m=\bigoplus_{k=0}^m \mC_m^{(k)}$, where $\mC_m^{(k)}$ is spanned by elements $e_{i_1}\ldots e_{i_k}$ with $1 \leq i_1 < \ldots < i_k \leq m$. The space $\mC_m^{(2)}$, provided with the commutator bracket, defines a model for the Lie algebra $\so (m)$ of the group $\Spin (m)$, which can be defined as the Lie group consisting of all even Clifford products of unit vectors in $\mR^m$. The spinor representation $\mS$ of $\Spin(m)$ can be realized as a minimal left ideal in $\mC_m$ by means of a standard construction involving the Witt basis. For $m$ odd, $\mS$ is irreducible with highest weight $\left(\frac{1}{2},\ldots,\frac{1}{2}\right)$, for $m$ even it decomposes into two irreducible summands $\mS^+ \oplus \mS^-$ with weights $\left(\frac{1}{2},\ldots\pm\frac{1}{2}\right)$. For the sake of convenience, we will restrict to the case of odd dimension in this paper (although it should be pointed out that the results also apply to the case of even dimensions, taking parity changes into account). \\
\noindent
The Dirac operator $\up_x:=\sum_{i=1}^m e_i \partial_{x_i}$ acts on the space $\mcC^{\infty}(\mR^m,\mS)$ of $\mS$-valued functions as a $\Spin(m)$-invariant elliptic differential operator, and its kernel is defined as the space of monogenic functions. In view of the fact that $\up_x^2=-\Delta_m$, the Dirac operator factorizes the Laplace operator in $m$ dimensions, which means that monogenic functions are also harmonic.\\
\noindent
In recent years, several authors \cite{Burevs2000,Burevs2001,Burevs2001a,VanLancker2006,Brackx,Eelbode2009,Eelbode2009a} have been working on generalizations of Clifford analysis techniques to the so-called higher spin representations: these are arbitrary irreducible representations $\mS_\lambda$ of $\Spin(m)$ with dominant half-integral highest weight 
$$ \lambda' :=\lambda+\left(\frac{1}{2},\ldots,\frac{1}{2}\right)\ ,$$ 
where $\lambda$ contains integers only. Given this highest weight, there is (up to normalization) a unique invariant first-order differential operator $$\mcR_{\lambda}:\mcC^{\infty}(\mR^m,\mS_\lambda) \rightarrow \mcC^{\infty}(\mR^m,\mS_\lambda)\ .$$
It is given as the projection of the Stein-Weiss gradient of a function to the irreducible summand $\mS_\lambda$ in $(\mR^m)^* \otimes \mS_\lambda$. For $\lambda=(1,0,\ldots,0):=(1)$ the operator $\mcR_{(1)}$ is the multidimensional analogue of the classical Rarita-Schwinger operator, see [RS]. In full generality, the operator $\mcR_{\lambda}$ is called the higher spin Dirac (HSD) operator for the representation $\mS_\lambda$ of $\Spin(m)$ or its Lie algebra $B_n$, see Section 3. \\
\noindent
The main result of this paper is a formula for the factorization of a suitable power of the Laplace operator on an arbitrary higher spin representation through HSD operators:
\begin{theorem}
Let $\lambda$ be a dominant integral weight, and let $n \in \mN$ be an integer satisfying $n > \lambda_1$. Then there exists an invariant differential operator $A_{\lambda,n}$ acting on $\mcC^{\infty}(\mR^m,\mS_\lambda)$ such that
$$\Delta^n = \mcR_{\lambda} A_{\lambda,n} \mcR_{\lambda}\ .$$
\end{theorem}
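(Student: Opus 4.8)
The plan is to reduce the assertion to a single polynomial identity relating $\mcR_\lambda$ and $\Delta$, and then to read the factorization off from that identity by an elementary manipulation. To set the stage I would first introduce, besides $\mcR_\lambda$, the remaining \emph{twistor operators}: the Stein--Weiss gradient $\mcC^{\infty}(\mR^m,\mS_\lambda)\to\mcC^{\infty}\bigl(\mR^m,(\mR^m)^*\otimes\mS_\lambda\bigr)$ followed by the $\Spin(m)$-equivariant projection onto each irreducible summand $\mS_\mu$ of $(\mR^m)^*\otimes\mS_\lambda$ (finitely many, with $\mu$ obtained from $\lambda$ by adding or removing a box), yielding first-order operators $T_\mu$ together with their formal adjoints $T_\mu^{\dagger}$. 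All of $\Delta,\mcR_\lambda,T_\mu,T_\mu^{\dagger}$ are $\Spin(m)$-invariant differential operators with constant coefficients, and what has to be shown is the existence of the ``sandwiched'' expression $\Delta^n=\mcR_\lambda A_{\lambda,n}\mcR_\lambda$ with $A_{\lambda,n}$ invariant; in fact the $A_{\lambda,n}$ I produce will be a polynomial in $\mcR_\lambda$ and $\Delta$, hence manifestly invariant.

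The heart of the matter is the claim that \emph{$\mcR_\lambda^2$ satisfies a monic polynomial identity of degree $\lambda_1+1$ over $\mC[\Delta]$}, of the shape
$$ \prod_{j=0}^{\lambda_1}\bigl(\mcR_\lambda^2+c_j^2\,\Delta\bigr)=0, $$
with pairwise distinct nonzero constants $c_0,\dots,c_{\lambda_1}$. To establish this I would pass to principal symbols: for fixed $\xi\neq 0$ the symbol $\sigma_\xi(\mcR_\lambda)\in\End(\mS_\lambda)$ commutes with the action of the stabiliser $\Spin(m-1)_\xi$, and since the branching $\mS_\lambda\!\downarrow\!\Spin(m-1)$ is multiplicity free it therefore acts by a scalar on each $\Spin(m-1)$-constituent of $\mS_\lambda$; those scalars can be computed from the $\Spin(m-1)$-equivariant raising/lowering structure of the gradient, for instance by diagonalising the $\Spin(m)$-Casimir on $(\mR^m)^*\otimes\mS_\lambda$ and restricting to the constituents in play. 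Ellipticity of $\mcR_\lambda$ forces every such scalar to be nonzero, and the explicit computation shows that $\sigma_\xi(\mcR_\lambda)^2$ takes exactly $\lambda_1+1$ distinct values, each a nonzero multiple of $|\xi|^2$; replacing $|\xi|^2$ by $-\Delta$ yields the displayed identity. Alternatively, and more in the spirit of the paper, the same identity should follow by assembling the quadratic relations among $\mcR_\lambda$, the $T_\mu$, the $T_\mu^{\dagger}$ and $\Delta$ --- a Weitzenb\"ock-type relation expressing $\mcR_\lambda^2$ through $\Delta$ and the operators $T_\mu^{\dagger}T_\mu$, together with ladder relations that re-express each $T_\mu^{\dagger}T_\mu$ modulo $\mcR_\lambda$ and $\Delta$ --- and iterating ``down the first row'', the recursion terminating at the spinor case $\lambda=0$ where $\mcR_\lambda^2=-\Delta$.

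Granting the identity, the rest is formal. Isolating the $\mcR_\lambda$-free term gives
$$ 0=\prod_{j=0}^{\lambda_1}\bigl(\mcR_\lambda^2+c_j^2\Delta\bigr)=\mcR_\lambda^2\,\widetilde P(\mcR_\lambda^2,\Delta)+\Bigl(\prod_{j=0}^{\lambda_1}c_j^2\Bigr)\Delta^{\lambda_1+1}, $$
where $\widetilde P$ is the polynomial obtained by deleting the pure $\Delta$-term; since $\mcR_\lambda$ commutes with $\widetilde P(\mcR_\lambda^2,\Delta)$ one may write $\mcR_\lambda^2\widetilde P=\mcR_\lambda\,\widetilde P\,\mcR_\lambda$, whence
$$ \Delta^{\lambda_1+1}=\mcR_\lambda\,\Bigl(-\bigl(\textstyle\prod_{j=0}^{\lambda_1}c_j^2\bigr)^{-1}\widetilde P(\mcR_\lambda^2,\Delta)\Bigr)\,\mcR_\lambda, $$
and for general $n>\lambda_1$ one takes $A_{\lambda,n}:=\Delta^{\,n-\lambda_1-1}A_{\lambda,\lambda_1+1}$, again a polynomial in $\mcR_\lambda$ and $\Delta$. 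The genuine obstacle is the polynomial identity itself: one must show that the degree in $\Delta$ is at most $\lambda_1+1$, so that no higher power of $\Delta$ is forced on the right, and --- for the sharpness asserted in the abstract --- that the $c_j$ are distinct and nonzero, so that the degree is exactly $\lambda_1+1$ and $\Delta^{\lambda_1}$ really does not factor through $\mcR_\lambda$. The remaining points (invariance of $A_{\lambda,n}$, passing between symbols and operators) are routine.
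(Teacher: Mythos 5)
Your proposal takes a genuinely different route from the paper, and the difference is substantive. The paper does \emph{not} seek a polynomial relation between $\mcR_\lambda$ and $\Delta$ alone. Instead, it introduces path operators $\PA{\mu}{\nu}$ (compositions of twistor operators $\TW{\lambda}{\omega}$ connecting \emph{different} representations $\mS_\lambda$), iterates the Weitzenb\"ock identity $-\LA{\kappa}=\mcR_\kappa^2+\sum_\omega\TW{\kappa}{\omega}\TW{\omega}{\kappa}$ across the whole weight lattice, and uses the anticommutation $\TW{\kappa}{\iota}\mcR_\iota+\mcR_\kappa\TW{\kappa}{\iota}=0$ to pull two factors of $\mcR_\mu$ out to the sides. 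The expansion terminates because path operators $\PA{\mu}{\lambda}$ with $\lambda\notin B(\mu)$ vanish (proved via a twisted de~Rham complex), and the farthest weight inside the box $B(\mu)$ is at distance $\mu_1$. Crucially, the resulting middle operator $A_{\lambda,n}$ involves products $\PA{\mu}{\lambda}\LA{\lambda}^{\,\cdot}\PA{\lambda}{\mu}$ and is \emph{not} a polynomial in $\mcR_\mu$ and $\Delta$.

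The gap in your argument is precisely the claim you yourself flag: that $\mcR_\lambda^2$ satisfies a monic identity over $\mC[\Delta]$ of degree exactly $\lambda_1+1$ with distinct nonzero roots. Your reasoning that $\sigma_\xi(\mcR_\lambda)$ is scalar on each $\Spin(m-1)$-constituent of $\mS_\lambda$ is fine, but it only bounds the degree by the number of constituent-pairs, and for a weight such as $\lambda=(2,1,0,\dots,0)$ the branching yields eight constituents, hence four $\pm$-pairs, so the count gives degree $\leq 4$, not the required $\leq 3 = \lambda_1+1$. Some further coincidence among the squared symbol-eigenvalues must therefore be established, and this is where the real work lies; ``the explicit computation shows'' is not a proof, and it is not obvious that the coincidence occurs. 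You also invoke ellipticity of every $\mcR_\lambda$ (to rule out a zero eigenvalue, which would kill the pure $\Delta^{\lambda_1+1}$ term in your expansion); this is plausible but not self-evident and is nowhere used in the paper's proof. The formal manipulation at the end --- isolating the constant term and inserting commuting $\mcR_\lambda$'s --- is correct \emph{given} the polynomial identity, and passing from a symbol identity to an operator identity is indeed exact for constant-coefficient operators; but without the degree bound the theorem does not follow. Your second, sketchier route (re-expressing each $T_\mu^\dagger T_\mu$ ``modulo $\mcR_\lambda$ and $\Delta$'') would require those second-order twistor compositions to lie in the algebra generated by $\mcR_\lambda$ and $\Delta$, a stronger claim than anything the paper uses; the paper instead recurses through the whole hierarchy of $\mS_\nu$ for $\nu\in B(\lambda)$ rather than collapsing everything onto $\mS_\lambda$.
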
 
\noindent
Thus every function in $\Ker \mcR_{\lambda}$ is $(\lambda_1+1)$-polyharmonic. The structure of $\Ker \mcR_{\lambda}$ is explicitly known only for the cases $\lambda=(k)$, see e.g. \cite{Burevs2001}, and $\lambda=(k,l)$, see e.g. \cite{Brackx}. In both cases the bound $(\lambda_1 + 1)$ is optimal, i.e. there are functions in $\Ker \mcR_{\lambda}$ which are not $\lambda_1$-polyharmonic.\\
\noindent
Our paper is structured as follows: we will briefly explain how the HSD operators arise within Clifford analysis framework in Section 2. At the same time, we will show how knowledge on the factorization of the Laplace operator on the underlying higher spin representations can be used to describe the (polynomial) homogeneous kernel spaces. Sections 3 and 4 are then devoted to the actual factorization of the Laplace operator. To do this, we will make use of techniques coming from representation theory. Therefore, we will briefly fix the notations in Section 3, before turning to the main problem in Section 4.


\section{Higher spin Clifford analysis}
Let $\mC_m$ be the universal complex Clifford algebra generated by an orthonormal basis $\{e_p : 1 \leq p \leq m\}$ for the vector space $\mR^m$, satisfying the multiplicative relations $e_pe_q + e_qe_p = -2\delta_{pq}$. As was pointed out earlier, the spinor space $\mS$ can then be realized as a minimal left ideal $\mS = \mC_m I$, where $I$ denotes a primitive idempotent (see \cite{Delanghe1992}).\\
\noindent 
The main motivation for considering HSD operators within the framework of Clifford analysis comes from the crucial fact that any higher spin representation $\mS_\lambda$ can be explicitly realized as a vector space of $\mS$-valued polynomials in several vector variables $\uu_p \in \mR^m$, satisfying certain systems of equations. In what follows, the variables $\uu_p$ and their associated (classical) Dirac operators $\up_p := \up_{u_p}$ have to be seen as dummy variables, ``fixing the values''.
\begin{definition}\label{simplicial monogenic}
A function $ f:\mR^{km}\rightarrow \mS,\,\, (\uu_1,\ldots,\uu_k)\mapsto f(\uu_1,\ldots,\uu_k)\ $ is simplicial monogenic iff the following conditions are satisfied:
$$\begin{array}{ccccl}
\up_p f & = & 0 && 1 \leq p \leq k\\
\langle\uu_p,\up_{q}\rangle f & = & 0 && 1\leq p < q \leq k
\end{array}\ .$$
The vector space of $\, \mS$-valued simplicial monogenic polynomials which are homogeneous of order $\lambda_p$ in the variable $\uu_p$ will be denoted by $\mS_\lambda = \mS_{\lambda_1,\ldots,\lambda_k}$ (with $\lambda_1 \geq \ldots\geq \lambda_k \geq 0$  from now on). 
\end{definition}
\noindent
In [CSV], the following result was obtained: 
\begin{theorem}
With respect to the regular representation $L$ of the spin group on $\mS_\lambda$, given by $L(s)f(\uu_1,\ldots,\uu_k) = s f(\overline{s}\uu_1 s,\ldots,\overline{s}\uu_k s)$, the space $\mS_\lambda$ defines a model for the the irreducible Spin$(m)$-representation with highest weight $\lambda'$.
\end{theorem}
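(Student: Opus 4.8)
\noindent
The statement is the several-variable analogue of the classical fact that the spherical monogenics of degree $\ell$ in one vector variable realize the $\Spin(m)$-representation of highest weight $\left(\ell+\tfrac12,\tfrac12,\ldots,\tfrac12\right)$, and the plan is to follow the three standard steps of that argument: first show $\mS_\lambda$ is a $\Spin(m)$-submodule, then exhibit an explicit highest weight vector of weight $\lambda'$, and finally rule out any other constituent by a dimension (equivalently character) count. For the first step, note that the operators in Definition~\ref{simplicial monogenic} intertwine the regular representation $L$ with itself: each Dirac operator $\up_p$ is $\Spin(m)$-invariant because left Clifford multiplication on the values and the rotation $\uu_p\mapsto\bar s\uu_p s$ of the arguments are compatible in the usual way, and each $\langle\uu_p,\up_q\rangle$ is a contraction of $\SO(m)$-equivariant tensors, hence commutes with $L$ as well. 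Since both families preserve the multidegree, $\mS_\lambda$ is the common kernel of invariant operators inside the finite-dimensional module of $\mS$-valued polynomials of multidegree $(\lambda_1,\ldots,\lambda_k)$; therefore it is a $\Spin(m)$-submodule and, being finite-dimensional, completely reducible.

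\noindent
For the highest weight vector, fix the Cartan subalgebra and positive roots of $B_n$ (with $m=2n+1$) as in Section~3, write each vector variable in null coordinates $z_{p,j}:=u_{p,2j-1}+iu_{p,2j}$ (a weight vector of weight $\delta_j$), and let $I$ be the primitive idempotent generating $\mS$, taken to be the highest weight spinor, of weight $(\tfrac12,\ldots,\tfrac12)$. I would then take the Gelfand--Tsetlin-type candidate
\[
 v_\lambda \;:=\; D_1^{\,\lambda_1-\lambda_2}D_2^{\,\lambda_2-\lambda_3}\cdots D_k^{\,\lambda_k}\,I\ ,\qquad \lambda_{k+1}:=0\ ,\qquad D_j:=\det\bigl(z_{p,i}\bigr)_{1\le p,i\le j}\ ,
\]
and verify four things, each of them elementary: $v_\lambda$ is homogeneous of degree $\lambda_p$ in $\uu_p$ (each $D_j$ is linear in every one of $\uu_1,\dots,\uu_j$); $\up_p v_\lambda=0$ (the polynomial involves only $z$-coordinates and the corresponding monogenic generators annihilate $I$, exactly as for one variable); $\langle\uu_p,\up_q\rangle v_\lambda=0$ for $p<q$ (the relevant part of this operator acts on the $z$-coordinates as the index shift $\sum_i z_{p,i}\,\partial_{z_{q,i}}$, which turns $D_j$ into a determinant with two equal rows when $q\le j$ and annihilates it when $q>j$); and $v_\lambda$ is killed by every positive root vector of $B_n$, each of which likewise either collapses a column of some $D_j$ onto an equal one, or acts on coordinates not present in $v_\lambda$, or is the spinorial raising part annihilating $I$. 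A weight count then gives $\mathrm{wt}(v_\lambda)=\sum_{i=1}^k\lambda_i\delta_i+\tfrac12\sum_{i=1}^n\delta_i=\lambda'$, so $\mS_\lambda$ contains a copy of the irreducible module $V_{\lambda'}$.

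\noindent
The remaining step, excluding further constituents, is where the real work lies. One route is a dimension/character count: iterating the Clifford--Fischer decomposition in the variables $\uu_1,\dots,\uu_k$ and keeping track of the induced $\GL(k)$-action expresses the ambient space $\mcP_{(\lambda_1,\dots,\lambda_k)}(\mR^{km},\mS)$ as a sum of terms $\mS_\mu$ tensored with ``harmonic'' Fischer factors, in which $V_{\lambda'}$ appears exactly once, as the leading ($\mu=\lambda$) term; one then checks that every other constituent meets the image of some $\uu_p$ or violates a mixed contraction, so that $\mS_\lambda$ is precisely this leading term. A more conceptual route is Howe duality: on $\mcP(\mR^{km})\otimes\mS$ the group $\Spin(m)$ forms a dual pair with $\mathfrak{sp}(2k)$ --- enlarged to $\mathfrak{osp}(1|2k)$ once the Dirac operators $\up_p$ and the multiplications by $\uu_p$ are adjoined --- the corresponding decomposition is multiplicity free, and the simplicial monogenic spaces $\mS_\lambda$ are exactly its $\Spin(m)$-isotypic components (equivalently, the spaces of joint highest weight vectors for the dual algebra), which forces $\mS_\lambda$ to be irreducible. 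Either way one concludes $\mS_\lambda\cong V_{\lambda'}$.

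\noindent
The main obstacle, then, is this last step: controlling multiplicities, i.e. proving that intersecting the kernels of the Dirac operators $\up_p$ with those of the mixed contractions $\langle\uu_p,\up_q\rangle$ leaves no spurious lower components. Concretely the technical core is to establish the several-variable Clifford--Fischer decomposition with its exact multiplicities --- equivalently, to identify the correct dual (super)algebra so that the relevant Howe duality is multiplicity free --- whereas the invariance step is formal and the construction of $v_\lambda$ is a finite explicit computation.
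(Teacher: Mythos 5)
The paper does not prove this statement; it quotes it directly from the reference [CSV], so there is no internal proof to measure your attempt against. Taken on its own terms, your first two steps are correct and would survive detailed scrutiny: the invariance of $\up_p$ and of $\langle\uu_p,\up_q\rangle$ under the regular action $L$ is standard, and your candidate $v_\lambda = D_1^{\lambda_1-\lambda_2}\cdots D_k^{\lambda_k}\,I$ is indeed a highest weight vector of weight $\lambda'$ lying in $\mS_\lambda$. The multidegree and weight computations check out, the operators $\sum_i z_{p,i}\partial_{z_{q,i}}$ with $p<q\le j$ kill $D_j$ by producing a repeated row, the remaining positive root vectors either collapse a column of some $D_j$, involve $\bar z$- or odd coordinates absent from the polynomial part, or reduce to the raising action on the highest weight spinor $I$, and the Dirac operators $\up_p$ annihilate $v_\lambda$ for the usual Witt-basis reason. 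That gives the containment $V_{\lambda'}\subseteq\mS_\lambda$.

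The gap is exactly the one you flag yourself: the reverse containment, i.e.\ irreducibility of $\mS_\lambda$, is not proved but only sketched, and the two sketches each invoke as a black box precisely the content of the theorem. In the Fischer route, ``one then checks that every other constituent meets the image of some $\uu_p$ or violates a mixed contraction'' is the whole difficulty, not a routine verification; in the Howe-duality route, both the multiplicity-freeness of the $(\Spin(m),\mathfrak{osp}(1|2k))$ decomposition of $\mcP(\mR^{km})\otimes\mS$ and the identification of simplicial monogenics with the joint lowest-weight vectors of the dual superalgebra are nontrivial statements that must be established or cited. (For the latter you would at least want to record that $k\le n$, so $m=2n+1>2k$ keeps the pair in the stable range, and that the extra scalar conditions $\langle\up_p,\up_q\rangle f=0$ needed for a dual-algebra lowest weight vector are automatic from $\up_pf=\up_qf=0$ by the Clifford relation.) As it stands the proposal is a correct and well-organized \emph{plan} along the lines [CSV] actually follow, but not a proof; the honest way to close it is to cite a multi-variable Fischer decomposition or a spinor-valued Howe duality theorem rather than to gesture at deriving one.
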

\noindent
This means that a HSD operator in Clifford analysis can be seen as an operator $\mcR_\lambda$ acting on functions $f(\ux;\uu_1,\ldots,\uu_k) \in \mcS_\lambda$, such that 
$$ \up_p\big(\mcR_\lambda f(\ux;\uu_p)\big) = 0 = \langle\uu_p,\up_q\rangle\big(\mcR_\lambda f(\ux;\uu_p)\big)\ , $$
with $p$ and $q$ indices as in the definition. Let us then first consider the (special) cases mentioned earlier, i.e. for which the kernel spaces are fairly understood. 

\subsection{The Rarita-Schwinger operators $\mcR_k$}

Fixing a highest weight $\lambda' = (k)'$, it follows from the Theorem above that the irreducible module with highest weight $\lambda'$ can be realized as the space $\mcM_k := \mcP_k(\mR^m,\mS) \cap \ker\up_u$ of $k$-homogeneous monogenics. The corresponding HSD operator, generalizing the Rarita-Schwinger operator coming from physics \cite{Rarita1941}, is usually denoted as $\mcR_k$ and is then defined as
$$ \mcR_k := \left(1 + \frac{\uu\:\up_u}{2k+m-2}\right)\up_x\ . $$
The function theory for this operator was to a great extent developed in \cite{Burevs2000,Burevs2001}, in which the fundamental solution, the Cauchy integral formula and explicit descriptions for polynomial solutions were obtained. In particular the following result was obtained: 
\begin{theorem}
There exists a differential operator $A_{2k+1}$ acting between spaces of $\mcM_k$-valued functions, such that $\mcR_kA_{2k+1} = \Delta^{k+1}_m$. 
\end{theorem}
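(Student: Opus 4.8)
This statement is the $\lambda=(k)$ instance of Theorem~1. Indeed, applying Theorem~1 with $\lambda=(k)$ and $n=k+1$ (admissible since $n>\lambda_1=k$) gives $\Delta_m^{\,k+1}=\mcR_{(k)}A_{(k),\,k+1}\mcR_{(k)}$; since $\mcR_{(k)}=\mcR_k$ and $A_{(k),\,k+1}$ has order $2k$, one may take $A_{2k+1}:=A_{(k),\,k+1}\mcR_k$, a differential operator of order $2k+1$ on $\mcC^{\infty}(\mR^m,\mcM_k)$. I would nonetheless prefer to give the following self-contained proof by induction on $k$, in the spirit of the arguments in \cite{Burevs2000,Burevs2001}: in the one-vector-variable situation the representation-theoretic combinatorics collapse, because the space $\mcP_k(\mR^m,\mS)$ in which $\up_x f$ takes values only meets the summands $\mcM_k,\mcM_{k-1},\dots$ of the Stein--Weiss gradient and never the two-row pieces that obstruct the general case.

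Set $c_k:=2k+m-2$ (nonzero, $m$ being odd). The engine of the argument is the fundamental splitting of the naive Dirac operator on $\mcM_k$-valued functions: from $\up_u\up_x f=-2\langle\up_x,\up_u\rangle f$ for $f\in\mcC^{\infty}(\mR^m,\mcM_k)$ one gets $\up_u^2\up_x f=0$, so that $\up_x f$ lies in the first two layers $\mcM_k\oplus\uu\,\mcM_{k-1}$ of the Fischer decomposition of $\mcP_k(\mR^m,\mS)$, and reading off these components yields
$$\up_x f=\mcR_k f+\frac{2}{c_k}\,\uu\,\mcT_k f,\qquad \mcT_k:=\langle\up_x,\up_u\rangle:\ \mcC^{\infty}(\mR^m,\mcM_k)\longrightarrow\mcC^{\infty}(\mR^m,\mcM_{k-1}),$$
where $\mcT_k$, the projection of the gradient onto $\mcM_{k-1}$, is the degree-lowering twistor operator. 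Beyond this, the argument uses only a short list of Clifford relations --- $\{\up_x,\uu\}=-2\langle\uu,\up_x\rangle$, $\{\up_u,\uu\}=-(2\mE_u+m)$, $[\langle\up_x,\up_u\rangle,\uu]=\up_x$, $[\up_u,\langle\uu,\up_x\rangle]=\up_x$ --- together with the fact that $\Delta_m=-\up_x^2$ commutes with every operator in the dummy variable $\uu$. The plan is: expand $\Delta_m^{\,k+1}f=-\up_x^2(\Delta_m^{\,k}f)$ by applying the splitting to the inner $\up_x(\Delta_m^{\,k}f)$ and then to the outer $\up_x$ of the resulting $\mcM_k$- and $\mcM_{k-1}$-valued pieces; use the inductive hypothesis $\Delta_m^{\,k}=\mcR_{k-1}A_{2k-1}$ on $\mcC^{\infty}(\mR^m,\mcM_{k-1})$ to absorb the factor $\Delta_m^{\,k}$ sitting on the $\mcM_{k-1}$-valued twistor component; and push the surviving factors $\uu$ and $\langle\uu,\up_x\rangle$ past $\up_x$ by the relations above. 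This recasts $\Delta_m^{\,k+1}$ on $\mcC^{\infty}(\mR^m,\mcM_k)$ as $\mcR_k\circ A_{2k+1}$, where $A_{2k+1}$ has the schematic shape $\alpha_k\,\up_x\Delta_m^{\,k}$ plus $\uu$ times lower-order twistor corrections built from $A_{2k-1}$, $\mcT_k$ and $\langle\uu,\up_x\rangle$ (with $\alpha_k\neq0$), and maps $\mcM_k$-valued functions to themselves since it is assembled from $\uu$, $\mcT_k$, the inductively admissible $A_{2k-1}$ and the projection already built into $\mcR_{k-1}$. The base case $k=0$ is immediate: $\mcM_0=\mS$, $\mcR_0=\up_x$, and $A_1:=-\up_x$ gives $\mcR_0A_1=-\up_x^2=\Delta_m$.

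The main obstacle is the bookkeeping in the inductive step. Each application of $\up_x$ leaves the space of $\mcM_k$-valued functions, so one must track the components that land in the deeper Fischer layers $|\uu|^2\mcM_{k-2},\,|\uu|^2\uu\,\mcM_{k-3},\dots$ and check that all spurious contributions cancel, leaving a genuine operator identity between $\mcM_k$-valued functions; equivalently, the real content is a commutation relation expressing $\langle\up_x,\up_u\rangle\mcR_k$ --- and, after iteration, $\Delta_m^{\,k+1}$ --- through $\mcR_k$, the twistor operators and $\Delta_m$, together with a verification that the recursion determining $\alpha_k$ and the correction coefficients is consistent and never forces a vanishing leading coefficient. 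That last point is exactly where the extra power is used: the same scheme applied to $\Delta_m^{\,k}$ breaks down, in accordance with the remark following Theorem~1 that a function in $\Ker\mcR_k$ is $(k+1)$-polyharmonic but in general not $k$-polyharmonic.
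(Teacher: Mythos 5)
Your first paragraph is correct: applying Theorem~1 with $\lambda=(k)$ and $n=k+1$ gives $\Delta_m^{k+1}=\mcR_k A_{(k),k+1}\mcR_k$, and setting $A_{2k+1}:=A_{(k),k+1}\mcR_k$ (still an operator on $\mcC^\infty(\mR^m,\mcM_k)$) recovers the stated factorization with a single $\mcR_k$ on the left. That is in fact the role this theorem plays in the paper: it is quoted from \cite{Burevs2000,Burevs2001} as motivation, and the paper's contribution is the general theorem of Section~4, proved via the decomposition of the twisted Dirac operator and the path-operator lemma, which subsumes it as a special case rather than re-proving it. Your alternative self-contained induction is a genuinely different route: it replaces that representation-theoretic machinery by explicit one-variable Clifford computations, using that for a single dummy variable the Fischer decomposition of $\mcP_k(\mR^m,\mS)$ is the ladder $\mcM_k,\,\uu\mcM_{k-1},\,|\uu|^2\mcM_{k-2},\dots$ so only degree-lowering twistor operators appear. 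The preliminary steps you give are right: $\up_u^2\up_x f=0$ does force $\up_x f$ into the first two layers, and $\up_x f=\mcR_k f+\tfrac{2}{c_k}\uu\,\mcT_k f$ with $\mcT_k=\langle\up_x,\up_u\rangle$ follows directly from the definition of $\mcR_k$. However, the inductive step itself is only an outline: you explicitly defer the cancellation of contributions in deeper Fischer layers and the nonvanishing of the leading coefficient $\alpha_k$, and that bookkeeping is exactly the substance of the argument --- and exactly what the paper's identities $\TW{\kappa}{\iota}\mcR_{\iota}+\mcR_{\kappa}\TW{\kappa}{\iota}=0$ and the path-operator commutation are designed to handle systematically. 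So the first argument is complete and consistent with the paper's logic; the second is a plausible but unfinished sketch in the spirit of the cited references, and would need the commutator computations actually carried out to count as a proof.
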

\noindent
In order to illustrate the link between this result and the structure of the space $\ker_h\mcR_k$ of $h$-homogeneous (polynomial) solutions for the operator $\mcR_k$, we also mention the following crucial result:
\begin{theorem}
The induction principle states that 
$$ \ker_h\mcR_k = \mcM_{h,k} \oplus \up_x^{-1}\big(\uu\:\ker_{h-1}\mcR_{k-1}\big)\ . $$
Here, the $\mcM_{h,k}$ stands for the space of $(h,k)$-homogeneous polynomials which are monogenic in both variables $(\ux,\uu)$, and $\up_x^{-1}$ associates to each $g \in \ker_{h-1}\mcR_{k-1}$ the unique solution of the system
$$ 
\begin{array}{ccl}
\up_x f & = & \uu g\\
\up_u f & = & 0
\end{array}
$$
\end{theorem}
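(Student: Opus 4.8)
The plan is to argue in three moves: (i) recast $\mcR_k f=0$ as a condition on the top $\uu$-degree part of $\up_x f$; (ii) show that the object so extracted is killed by $\mcR_{k-1}$; (iii) invert $\up_x$ explicitly and glue the two summands. For (i): since $\Delta_u$ is a scalar operator it commutes with $\up_x$, and $\up_u^2=-\Delta_u$, so $\up_u^2(\up_x f)=\up_x\up_u^2 f=0$ for every $\uu$-monogenic $f$; hence $\up_x f$ is $\uu$-harmonic. By the Fischer (monogenic) decomposition in the single vector variable $\uu$, a $k$-homogeneous $\uu$-harmonic spinor-valued polynomial has only its two lowest monogenic components, $\up_x f=M_0+\uu M_1$ with $M_0,M_1$ $\uu$-monogenic of degrees $k$ and $k-1$ (with $\ux$-dependent coefficients). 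Plugging this into $\mcR_k=(1+\uu\,\up_u/(2k+m-2))\up_x$ and using $\up_u M_0=0$ and $\up_u(\uu M_1)=-(2k+m-2)M_1$ yields $\mcR_k f=M_0$. So $f\in\ker_h\mcR_k$ exactly when $\up_x f=\uu g$ for some $g$, which is then forced to be $(h-1)$-homogeneous in $\ux$ and $(k-1)$-homogeneous $\uu$-monogenic; write $g=\Phi(f)$. The map $\Phi$ is linear, with $\ker\Phi=\mcM_{h,k}$.

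For (ii), apply $\up_x$ to $\up_x f=\uu g$: with $\up_x^2=-\Delta_x$, the Leibniz-type identity $\up_x(\uu g)=-2\sum_i u_i\partial_{x_i}g-\uu\,\up_x g$, and the decomposition $\up_x g=M_0'+\uu M_1'$ coming from step (i) applied to $g$ (so $\mcR_{k-1}g=M_0'$), one gets an identity whose left-hand side is $-\Delta_x f$. Applying $\up_u$ annihilates the left-hand side, since $-\Delta_x\up_u f=0$, while the right-hand side collapses to $(2k+m-4)M_0'$ once one uses $\up_u(\sum_i u_i\partial_{x_i}g)=\up_x g$, $\up_u(\uu M_0')=-(2k+m-2)M_0'$ and $\up_u(|\uu|^2M_1')=2\uu M_1'$. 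Since $2k+m-4\neq0$ in the odd dimensions considered, $M_0'=\mcR_{k-1}g=0$, i.e.\ $\Phi(\ker_h\mcR_k)\subseteq\ker_{h-1}\mcR_{k-1}$.

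It remains to invert $\up_x$. For $g\in\ker_{h-1}\mcR_{k-1}$ one constructs $f=\up_x^{-1}(\uu g)$ by a Fischer-type ansatz --- a linear combination of $\ux\uu g$, $\uu\ux g$, $\ux^2\sum_i u_i\partial_{x_i}g$, and if necessary one further term --- whose coefficients, rational in $h,k,m$, are fixed by the two requirements $\up_u f=0$ and $\up_x f=\uu g$; one then checks that the small linear system so obtained is solvable in the relevant dimensions. By step (i), any such $f$ automatically lies in $\ker_h\mcR_k$, with $\Phi(f)=g$; hence $\up_x^{-1}(\uu\ker_{h-1}\mcR_{k-1})\subseteq\ker_h\mcR_k$, and, together with (ii), for an arbitrary $f\in\ker_h\mcR_k$ the element $f-\up_x^{-1}(\uu\,\Phi f)$ lies in $\ker\Phi=\mcM_{h,k}$. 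This yields $\ker_h\mcR_k=\mcM_{h,k}+\up_x^{-1}(\uu\ker_{h-1}\mcR_{k-1})$, and the sum is direct because an $f$ in the intersection satisfies $\up_x f=0$ and $\up_x f=\uu g$, forcing $g=0$ and $f=\up_x^{-1}(0)=0$.

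The genuinely non-formal ingredient is step (iii): producing $\up_x^{-1}$ in closed form and verifying that the linear system for its coefficients is non-degenerate --- exactly where low-dimensional exceptions, if any, would show up --- together with the clarification that, since the solutions of $\up_x f=\uu g$, $\up_u f=0$ form a coset of $\mcM_{h,k}$, the notation $\up_x^{-1}(\uu g)$ refers to the distinguished representative supplied by the formula (equivalently, the one Fischer-orthogonal to $\mcM_{h,k}$). Step (ii) is a routine computation in the Weyl--Clifford algebra generated by $\ux,\uu,\up_x,\up_u$, the only care being the Clifford anticommutators.
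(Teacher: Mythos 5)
The paper does not prove this theorem; it is stated as a result from the literature (cited from \cite{Burevs2000,Burevs2001}), so there is no in-text argument to compare against. Judged on its own, your write-up is correct and cleanly executed on steps (i) and (ii): the commutation $\Delta_u\up_x=\up_x\Delta_u$ gives $\uu$-harmonicity of $\up_x f$, the Fischer split $\up_x f=M_0+\uu M_1$ identifies $\mcR_k f$ with $M_0$, and the identity $\up_u\bigl(-\uu M_0'+|\uu|^2M_1'-2\langle\uu,\partial_x\rangle g\bigr)=(2k+m-4)M_0'$ correctly forces $\mcR_{k-1}g=0$. These two steps produce the exact sequence $0\to\mcM_{h,k}\to\ker_h\mcR_k\to\ker_{h-1}\mcR_{k-1}$ via $\Phi$, which is the easy half of the statement.

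The genuine gap is step (iii), the surjectivity of $\Phi$, and it is not just a computation left to the reader: the ansatz you propose is too small once $k\geq 2$. Writing $f=a\,\ux\uu g+b\,\langle\ux,\uu\rangle g+c\,|\ux|^2\langle\uu,\partial_x\rangle g+d\,|\uu|^2\langle\ux,\partial_u\rangle g$ (your three terms, after replacing $\uu\ux g$ by $-\ux\uu g-2\langle\ux,\uu\rangle g$, plus the natural fourth), one finds
$$\up_u f=\bigl(a(m+2k-4)+b\bigr)\,\ux g+(2d-2a)\,\uu\langle\ux,\partial_u\rangle g+c\,|\ux|^2\up_x g\,.$$
Since $\up_x g=\uu M_1'$ is generically nonzero for $g\in\ker_{h-1}\mcR_{k-1}$, the last term forces $c=0$, and with $c=0$ the requirement $\up_x f=\uu g$ cannot absorb the unavoidable $\ux\langle\uu,\partial_x\rangle g$ contribution. (For $k=1$ all obstructions vanish because $\langle\ux,\partial_u\rangle g=0$ and $\up_x g=0$, and indeed $a=-\tfrac{1}{2(m+h-2)}$, $b=-a(m-2)$, $c=-a$ solve the system.) To repair this for general $k$ one must enlarge the ansatz by terms of higher derivative order built from $\up_u\up_x g$ (equivalently from $M_1'$), e.g.\ $|\ux|^2|\uu|^2\up_u\up_x g$, and then verify a larger linear system. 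This is precisely where the real content lies and where the hypothesis $g\in\ker_{h-1}\mcR_{k-1}$ is used, so asserting solvability without carrying it out leaves the theorem unproven. An alternative that sidesteps the explicit $\up_x^{-1}$ altogether is a dimension count: compute $\dim\ker_h\mcR_k$ independently from the Fischer decomposition of the space of $h$-homogeneous $\mcM_k$-valued polynomials and compare it with $\dim\mcM_{h,k}+\dim\ker_{h-1}\mcR_{k-1}$; injectivity from steps (i)--(ii) then forces surjectivity of $\Phi$.
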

\noindent
\textbf{Remarks:}
\begin{enumerate}
\item This theorem exhibits an important feature of HSD operators, as opposed to the classical Dirac operator: the space of polynomial solutions is no longer irreducible under the action of the spin group. As a matter of fact, decomposing this kernel space is a central problem within this setting.
\item The action of the operator $\up_x^{-1}$ from the Theorem is nothing but the action of a (dual) twistor operator, cfr. infra. 
\item In particular, the theorem above illustrates that solutions for $\mcR_k$ are indeed polyharmonic: it suffices to perform induction on the index $k$ to see that $\Delta_m^{k+1}$ indeed acts trivially on $\ker\mcR_k$. 
\end{enumerate}
The Theorem essentially says that in order to decompose the kernel of the operator $\mcR_k$, one needs two basic ingredients: a special type of solutions (the double monogenics), together with information on the action of the twistor operator on solutions for the ``simpler'' system $\mcR_{k-1}f = 0$.

\subsection{The higher spin operators $\mcR_{k,l}$}

In a recent series of papers \cite{Eelbode2009a,Eelbode2009}, we have considered the case where $\lambda' = (k,l)'$. Our motivation for doing so lies in the fact that the case of the Rarita-Schwinger operator, does not yet contain the germs from which the behaviour of the most general case can be conjectured. The corresponding HSD operator $\mcR_{k,l}$, acting on the space of $\mcS_{k,l}$-valued functions taking, was found to be given by:
$$ \mcR_{k,l} := \left(1 + \frac{\uu_1\:\up_1}{2k+m-2}\right)\left(1 + \frac{\uu_2\:\up_2}{2l+m-4}\right)\up_x\ .$$
The main function theoretical results were developed, and in particular an accurate description of the polynomial solution space was obtained. Without going into full details, we mention the following result:
\begin{theorem}
A refined version of the induction principle leads to the following decomposition for the kernel of the operator $\mcR_{k,l}$: 
$$ \ker_h\mcR_{k,l} \cong \bigoplus_{i=0}^l\bigoplus_{j=0}^{k-l}\mcM_{h,k-j,l-i}^s\ . $$
\end{theorem}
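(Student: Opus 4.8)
The plan is to establish a one-step \emph{refined induction principle} for $\mcR_{k,l}$ and then to iterate it, in direct analogy with the Rarita-Schwinger case recalled above. The point of departure is that, by the explicit factorization $\mcR_{k,l} = \left(1 + \frac{\uu_1\:\up_1}{2k+m-2}\right)\left(1 + \frac{\uu_2\:\up_2}{2l+m-4}\right)\up_x$ together with the invertibility of the two projection factors on the homogeneous spaces at hand, a function $f \in \ker_h\mcR_{k,l}$ is forced to have $\up_x f$ in a controlled subspace of the $(h-1,k,l)$-homogeneous $\mS$-valued polynomials. A Fischer-type decomposition of that subspace in the variables $\uu_1,\uu_2$ then splits $f$ into a part annihilated by $\up_x$ — which, after a further Fischer reduction in $\ux$, will account for the triple simplicial monogenics $\mcM_{h,k,l}^{s}$ — plus a part lying in the ranges of the two dual twistor operators $\uu_1\,\up_x^{-1}$ and $\uu_2\,\up_x^{-1}$ applied to solutions of the simpler systems $\mcR_{k-1,l}g=0$ and $\mcR_{k,l-1}g=0$ respectively. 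As in the one-variable case, $\up_x^{-1}$ here assigns to such a $g$ the unique $f$ with $\up_x f = \uu_i g$ and $\up_1 f = \up_2 f = 0$.

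Concretely I would proceed in four steps. First, verify that the two twistor operators are well defined and injective on the relevant kernel spaces; this is a Clifford-algebraic computation from the defining relations and the ellipticity of $\up_x$, entirely parallel to \cite{Burevs2001}. Second, identify the $\up_x$-annihilated contribution: using the explicit shape of $\mcR_{k,l}$ and degree/weight arguments, show that it reduces to copies of the triple simplicial monogenics $\mcM_{h,k,l}^{s}$, i.e. that the extra simplicial conditions $\langle\ux,\up_1\rangle f = \langle\ux,\up_2\rangle f = 0$ are either automatic or absorbed into lower-order terms. Third — the crux — prove that the resulting one-step decomposition is direct and exhausts $\ker_h\mcR_{k,l}$. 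Fourth, run a double induction on $(k,l)$ with $k \geq l \geq 0$, the base cases being $l=0$ (the Rarita-Schwinger result) and the diagonal $k=l$: feeding the already-established decompositions of $\ker\mcR_{k-1,l}$ and $\ker\mcR_{k,l-1}$ into the one-step formula and carefully re-indexing, the double sum $\bigoplus_{i=0}^l\bigoplus_{j=0}^{k-l}\mcM_{h,k-j,l-i}^{s}$ should drop out.

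The hard part is the third step, and in particular understanding why the two twistor families are \emph{not} independent: applying $\uu_2\,\up_x^{-1}$ to certain solutions of $\mcR_{k,l-1}g=0$ lands inside the image of $\uu_1\,\up_x^{-1}$, and pinning down exactly when this happens is what caps the index $j$ at $k-l$ and reflects the dominance constraint $\lambda_1 \geq \lambda_2$ built into the highest weight. The most transparent route is representation-theoretic: each homogeneous component is a finite-dimensional $\Spin(m)$-module, so it is enough to match the two sides as multiplicity spaces, which reduces the statement to a Pieri/branching-type identity for highest weights of the shape $(k,l)' = \left(k+\frac{1}{2},l+\frac{1}{2},\frac{1}{2},\ldots,\frac{1}{2}\right)$. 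This identity can be read off from the branching rule for the appropriate module over the $\mathfrak{sp}(6)$ Howe dual partner of $\Spin(m)$ acting on the three vector variables $\ux,\uu_1,\uu_2$, once one checks that the twistor operators produce enough independent copies to saturate it. As an alternative, the abstract $\Spin(m)$-module structure could be extracted from the factorization $\Delta^n = \mcR_{k,l}A_{k,l,n}\mcR_{k,l}$ of the main Theorem together with the polyharmonic Fischer decomposition; but it is the explicit twistor description that makes the individual summands concrete.
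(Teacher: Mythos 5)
The theorem you are trying to prove is \emph{not} proved in this paper: it is quoted without proof from \cite{Eelbode2009a,Eelbode2009} in the subsection on $\mcR_{k,l}$, precisely to motivate the factorization result of Section~4. So there is no internal proof to compare against; what the paper does supply, in Lemma~\ref{pathops}, is the mechanism that controls which indices occur in the decomposition, and that is where your proposal has a genuine gap.

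Your overall strategy (refined induction principle, iterate, match multiplicities) is in the right spirit. But your explanation of why $j$ is capped at $k-l$ is wrong. You attribute the cutoff to an overlap between the images of the two dual twistor maps and to ``the dominance constraint $\lambda_1 \geq \lambda_2$.'' Dominance alone would produce the full triangle $k \geq k' \geq l' \geq 0$, $l' \leq l$; the actual index set is the box $B((k,l))$ of Section~4, which additionally demands $k' \geq l$, and that is strictly stronger than dominance. The mechanism producing this extra restriction is not that one twistor image absorbs another but that certain \emph{compositions of twistor operators vanish identically}: this is Lemma~\ref{pathops}(iii), proved by recognizing the offending composition as the restriction and projection of a segment of a twisted de~Rham complex, where two consecutive first-order operators compose to zero. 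Without that vanishing your step three cannot close, because nothing in your argument stops the iteration from producing contributions indexed by weights $(k',l')$ with $k' < l$. Two smaller points: the Howe dual partner of $\Spin(m)$ acting on $\mS$-valued polynomials in three vector variables $\ux,\uu_1,\uu_2$ is the Lie superalgebra $\mathfrak{osp}(1|6)$, not $\mathfrak{sp}(6)$ (the latter is the dual pair for scalar-valued harmonics); and the superscript $s$ in $\mcM_{h,k,l}^s$ records a genuine extra constraint beyond triple monogenicity, which you cannot dismiss as ``automatic or absorbed into lower-order terms'' — it is precisely this constraint that makes the multiplicity count in your Pieri-type matching come out right.
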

For $(i,j) = (0,0)$, one obtains the space $\mcM_{h,k,l}^s$, which is the analogue of the space of double monogenics in the present setting. To be more precise, this is the space of triple monogenics, satisfying an additional constraint (hence the extra superscript) to make sure that the correct values are assumed. For all other choices $(i,j) \neq (0,0)$, one obtains solutions for ``simpler'' equations $\mcR_{k-i,l-j}f = 0$ which are in certain sense inverted through a suitable action of associated twistor operators. It is then crucial to point out that \textit{not all} the indices $(k-i,l-j)$ are used in the Theorem above. As a matter of fact, the indices needed in the decomposition above are \textit{precisely} the indices needed in the decomposition of the Laplace operator on $\mcS_{k,l}$-valued functions (see Section 4 for details).

\subsection{The general case $\mcR_\lambda$}

In a recent paper \cite{DeSchepper2010}, techniques were developed to obtain explicit expressions for general HSD operators $\mcR_\lambda$. Moreover, the authors obtained the corresponding special class of solutions $\mcM_{h,\lambda}^s$ for these operators which generalizes the space of double monogenics from the Rarita-Schwinger case. To complete the description of the polynomial kernel space, one then only needs to understand the behaviour of the twistor inversion, relating solutions for $\mcR_\lambda$ to solutions for ``simpler'' HSD operators. A crucial step towards this understanding is the Theorem we are about to prove in Section 4, regarding the decomposition of the Laplace operator. 


\section{Notations}

Let $\LAg$ be a complex semisimple Lie algebra of the type $B_n$ with Cartan subalgebra $\LAh$. By $(\epsilon_i)_1^n$ we denote the standard basis of $\mC^n \equiv \LAh^*$. The dominant part $\Lambda_d$ of the weight lattice $\Lambda$ of $\LAg$ can be expressed in the standard basis as $\sum_1^n \lambda_i \epsilon_i$ with $\lambda_i$ either all integral or all half-integral non-negative numbers satisfying $\lambda_i\geq\lambda_{i+1}$ for all $i\in\{1,\ldots,n-1\}$. For $\lambda$ integral, this is the highest weight of a tensorial representation with symmetry given by the Young diagram with $\lambda_i$ boxes in the $i$-th row. Let us denote by $\lambda'$ the weight $\lambda + \left(\frac{1}{2}\right)_n$, i.e. the Cartan product of $\lambda$ and the spinor representation. Finally, we denote by $\mV_\lambda$ (resp. $\mS_\lambda$) the irreducible representation with highest weight $\lambda$ (resp. $\lambda'$) and by $\mcV_\lambda$ (resp. $\mcS_\lambda$) the set of smooth functions with values in this representation.

Next, we also define a graph $\mcW$ whose set of nodes consists of all integral dominant weights and where two weights $\lambda,\mu$ are joined by an edge iff $\sum_{i=1}^n (\lambda_i-\mu_i)^2 = 1$. The graph $\tilde{\mcW}$ has the same set of nodes, but the edges are defined by $\sum_{i=1}^n (\lambda_i-\mu_i)^2 \leq 1$. It is then easily seen that there exists a first-order $\LAg$-invariant operator from $\mcV_\lambda$ to $\mcV_\mu$ iff $\lambda$ and $\mu$ are joined in $\tilde{\mcW}$, and we denote such operators by $\DD{\lambda}{\mu}:\mcV_\mu \rightarrow \mcV_\lambda$. Each of these operators is unique, up to a constant: the issue of normalization in the cases of interest will be dealt with later. As we are mainly interested in operators between half-integral representations, generalizing twistor and Rarita-Schwinger operators, we introduce two more symbols: 
$$ \TW{\lambda}{\mu}:=\DD{\lambda'}{\mu'}\ \ \mbox{and}\ \ \mcR_{\lambda}:=\DD{\lambda'}{\lambda'}\ .$$ We also introduce a similar symbol $\LA{\lambda}$ for the Laplace operator on $\mcS_{\lambda}$, and for the sake of convenience, we put any operator $\TW{\lambda}{\mu}, \RS{\lambda}, \LA{\lambda}$ for which the weights are non-dominant equal to zero.


\section{Path operators}

First of all, we define two particular sets of weights: 
\begin{definition}
Let $\mu$ and $\nu$ be two integral dominant weights such that $\mu \succ \nu$ in the Bruhat order, i.e. $\mu_i \geq \nu_i$ for all $i\in\{1,\ldots,n\}$. A {\it path} from $\nu$ to $\mu$ is a sequence $(\lambda_p)_0^N$ of weights in $\mcW$ such that $\lambda_0=\mu$, $\lambda_N=\nu$, and for all indices $p\in\{1,\ldots,N\}$, we have that $\lambda_p \succ \lambda_{p-1}$ and $|\lambda_p,\lambda_{p-1}|=1$. This means that for two successive weights $\lambda_{p-1}$ and $\lambda_{p}$, there is an index $Ch(p)\in\{1,\ldots,n\}$ such that $\lambda_p-\lambda_{p-1}=\epsilon_{Ch(p)}$, we call this index the {\it change} at $p$. A reverse path is a sequence of weights $(\lambda_p)_0^N$ such that $(\lambda_p)_N^0$ is a path. When there will be no danger of confusion, we will use the term path for both paths and reverse paths. We call the number of edges $N$ the {\it length} of the path or reverse path and denote it by  $\LE{\mu}{\nu}$, clearly it is the distance between $\mu$ and $\nu$ in the Manhattan metric for any path between $\mu$ and $\nu$. 
\end{definition}
\begin{definition}
Given a dominant integral weight $\mu$, we define {\it the box} $B(\mu)$ of the weight $\mu$ as the set of dominant integral weights $\lambda$ satisfying $\mu_i \geq \lambda_i \geq \mu_{i+1}$ for all $i\in\{1,\ldots,n-1\}$ and $\mu_n \geq \lambda_n \geq 0$. 
\end{definition}
\noindent
Given a path, we then define the following differential operator:
\begin{definition}
For any path $(\lambda_p)_0^N$, the path operator is a composition of twistor operators along the path:
$$
\PA{\mu}{\nu}=\TW{\lambda_{N}}{\lambda_{N-1}}\TW{\lambda_{N-1}}{\lambda_{N-2}} \ldots \TW{\lambda_{1}}{\lambda_0}
$$
Similarly, the operator $\PA{\nu}{\mu}$ is defined by composing along the reverse path. 
\end{definition}
\noindent
The notation $\PA{\mu}{\nu}$ from the definition above is justified by the following crucial lemma:
\begin{lemma}\label{pathops}
Let $\mu \succ \nu$ be dominant integral weights. Then there exists a normalization of the twistor operators $\TW{\lambda}{\omega}$ such that
\begin{enumerate}
\item[(i)] $\PA{\mu}{\nu}$ and $\PA{\nu}{\mu}$ do not depend on the choice of the path from $\nu$ to $\mu$, modulo a constant multiple.
\item[(ii)] For a suitable choice of normalization of the twistor operators, one has that
\begin{align*}
\PA{\nu}{\mu} \mcR_{\mu} &= \mcR_{\nu}\PA{\nu}{\mu} \\
\PA{\mu}{\nu} \mcR_{\nu} &= \mcR_{\mu}\PA{\mu}{\nu}\ .
\end{align*}
\item[(iii)] For all integral dominant weights $\lambda \prec \mu$ not in $B(\mu)$, the path operators $\PA{\mu}{\lambda}$ and $\PA{\lambda}{\mu}$ are zero.
\end{enumerate}
\end{lemma}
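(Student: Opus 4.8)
The plan is to establish the three assertions essentially in the order \emph{(ii)}, \emph{(i)}, \emph{(iii)}, since the intertwining property is the most structural and will feed into the others. First I would work locally, on a single edge: for an edge $\lambda_{p-1}\to\lambda_p$ with change $\mathrm{Ch}(p)=c$, the twistor operator $\TW{\lambda_p}{\lambda_{p-1}}$ and the Rarita--Schwinger operators $\mcR_{\lambda_{p-1}}$, $\mcR_{\lambda_p}$ are all first-order invariant operators between the relevant half-integral representations, so each two-sided composite $\TW{\lambda_p}{\lambda_{p-1}}\mcR_{\lambda_{p-1}}$ and $\mcR_{\lambda_p}\TW{\lambda_p}{\lambda_{p-1}}$ is a second-order invariant operator $\mcS_{\lambda_{p-1}}\to\mcS_{\lambda_p}$. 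The key point is that the space of such invariant operators is at most one-dimensional (this is a branching/multiplicity statement: the relevant weight $\lambda_p'$ occurs with multiplicity one in the ``second-order jet'' built from $\lambda_{p-1}'$), so the two composites must be proportional. Fixing, edge by edge, the normalization of $\TW{\lambda_p}{\lambda_{p-1}}$ to make this proportionality constant equal to $1$ gives the single-edge version of \emph{(ii)}, and composing along a path yields the stated $\PA{\nu}{\mu}\mcR_\mu=\mcR_\nu\PA{\nu}{\mu}$; the reverse-path identity is obtained the same way (or by a formal adjoint argument).

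Next I would deduce path-independence \emph{(i)}. The combinatorial input is that any two paths from $\nu$ to $\mu$ in $\mcW$ differ by a sequence of elementary ``square moves'': swapping the order of two changes in distinct coordinates $c\neq c'$ that are applied consecutively. So it suffices to show that for two edges in distinct coordinates the corresponding rank-two composite of twistor operators is symmetric up to a constant, i.e. $\TW{\kappa}{\rho}\TW{\rho}{\sigma}$ and $\TW{\kappa}{\rho'}\TW{\rho'}{\sigma}$ agree modulo scalar, where $\rho,\rho'$ are the two intermediate weights of the $2\times2$ square. Again this is a multiplicity-one statement for second-order invariant operators $\mcS_\sigma\to\mcS_\kappa$: both composites are nonzero invariant operators of the same order between the same irreducibles, hence proportional. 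One must check the composites are genuinely nonzero on the square — this is where one uses that the constituents are the \emph{twistor} (off-diagonal Stein--Weiss) projections and not accidentally vanishing — and then a single global rescaling reconciles the constants coherently across all squares, using that the ``square'' relations generate all path equivalences.

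For \emph{(iii)}, the point is that leaving the box $B(\mu)$ forces some partial sum of the weight to violate dominance en route, or more precisely forces one of the intermediate twistor operators $\TW{\lambda_p}{\lambda_{p-1}}$ to have a non-dominant source or target; by the convention fixed in Section 3, such an operator is zero, hence the whole path operator vanishes, and by \emph{(i)} this is independent of the chosen path. Concretely, if $\lambda\prec\mu$ but $\lambda\notin B(\mu)$ then there is a coordinate $i$ with $\lambda_i<\mu_{i+1}$ (or $\lambda_n<0$), and any monotone lattice path from $\lambda$ up to $\mu$ must at some step produce a weight that is not dominant as a $B_n$-weight — so $\TW{}{}$ along that step is the zero operator; composing with zero gives zero.

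I expect the main obstacle to be the normalization bookkeeping in \emph{(ii)} together with checking non-vanishing of the second-order composites. The multiplicity-one abstract nonsense gives proportionality for free, but to get honest equalities (not just up-to-scalar) one must make a coherent choice of scale for every twistor operator along every edge simultaneously, and verify that the constants one is forced into by the intertwining requirement are mutually consistent (no ``cocycle'' obstruction around the squares) — this consistency is exactly what \emph{(i)} guarantees, so \emph{(i)} and \emph{(ii)} are somewhat entangled and should be set up together. The non-vanishing checks are where explicit Clifford-analytic or Casimir computations (of the type in \cite{DeSchepper2010}) are unavoidable, and I would isolate them as a preliminary computation on a single $2\times2$ square so they can be reused.
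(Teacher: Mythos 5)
Your argument for part \emph{(iii)} does not work. You claim that $\lambda\prec\mu$ with $\lambda\notin B(\mu)$ forces every monotone path from $\lambda$ to $\mu$ to pass through a non-dominant weight, so that the path operator vanishes by the convention of Section 3. This is false: take $\mu=(2,2)$ and $\lambda=(1,1)$. Then $\lambda$ is dominant, $\lambda\prec\mu$, and $\lambda\notin B(\mu)$ (the box condition requires $\lambda_1\geq\mu_2=2$), yet the path $(1,1)\to(2,1)\to(2,2)$ consists entirely of dominant weights. Part \emph{(iii)} is a genuine vanishing theorem, not a bookkeeping consequence of the zero convention. The paper proves it by routing the path through three specific weights $\lambda_-\prec\lambda_0\prec\lambda_+$ inside $B(\mu)$ which are Cartan products of a fixed $\mS_\kappa$ with the form representations $\mV_{\omega_-},\mV_{\omega_0},\mV_{\omega_+}$; the composition $\TW{\lambda_+}{\lambda_0}\TW{\lambda_0}{\lambda_-}$ is then identified (up to scalar) with the Cartan-part projection of two consecutive differentials of a twisted de Rham complex, hence is zero. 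Nothing in your proposal supplies this input.

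For \emph{(i)} and \emph{(ii)} your route is different from the paper's and has two soft spots. The paper obtains all three operator identities it needs (the square relation $\TW{\kappa}{\theta}\TW{\theta}{\iota}+\TW{\kappa}{\omega}\TW{\omega}{\iota}=0$ and the anticommutation $\TW{\kappa}{\iota}\mcR_{\iota}+\mcR_{\kappa}\TW{\kappa}{\iota}=0$) in one stroke from the fact that the twisted Dirac operator $D=\mathrm{id}\otimes\up_x$ on $\mV_\lambda\otimes\mS$ squares to the scalar operator $-\Delta$, which is diagonal on the summands. You instead invoke a multiplicity-one claim for second-order invariant operators between $\mcS_\sigma$ and $\mcS_\kappa$; this is plausible but is precisely the kind of branching statement that needs proof (for $B_n$ the weight $\epsilon_c$ occurs in $\mV_{(2,0,\ldots,0)}$, so the relevant tensor-product multiplicities are not obviously one), and you also still owe the non-vanishing of the composites, which you acknowledge but do not supply. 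Second, even granting proportionality in \emph{(ii)}, you cannot make the constant equal to $1$ by rescaling the twistor operator alone: rescaling $\TW{\kappa}{\iota}$ multiplies $\TW{\kappa}{\iota}\mcR_{\iota}$ and $\mcR_{\kappa}\TW{\kappa}{\iota}$ by the same factor, so the ratio (which is $-1$) is unchanged. The paper fixes this by also renormalizing the operators $\mcR_\mu$ along preferred paths (demanding $\mcR_{\mu}\PA{\mu}{0}=\PA{\mu}{0}\mcR_{0}$), which is consistent because all paths between two fixed weights have the same length. You would need the analogous adjustment.
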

\noindent
\textbf{Remark: }note that property $(ii)$ from above was already mentioned in a previous section. It essentially relations solutions for different HSD operators, through the action of a twistor operator.
\begin{proof}
The tool for proving these properties is the decomposition of the twisted Dirac operator. For any dominant (integral) weight $\lambda$, the representation $\mV_\lambda \otimes \mS$ decomposes as $\bigoplus \mV_{\mu}$, the sum being taken over the set $$\Lambda_\lambda:=\left\{\left(\lambda_1+\sigma_1\frac{1}{2},\ldots,\lambda_n+\sigma_n\frac{1}{2}\right)|\sigma_i\in\{\pm1\},i\in\{1,\ldots,n\}\right\},$$
where each summand appears exactly once, up to possible omissions of weights which are not dominant. For a given dominant weight $\lambda$, let us encode summands in this decomposition into a sequence of pluses and minuses like $(-,+,+,-,-,-,+)$. We then consider the operator $D=id \otimes \up_x$ on $\mV_\lambda \otimes \mS$-valued functions, where $\up_x$ is the Dirac operator. This twisted Dirac operator $D$ is then equal to a sum of twistor operators (and HSD operators) on each summand. These are nonzero iff the plus-minus codes of the source and target summands differ at most at one position. If we think of these $2^n$ $(\pm)$-codes as vertices of an $n$-dimensional hypercube, there is a nonzero twistor operator between two different vertices iff they are joined by an edge of the hypercube. Since $D^2=-\Delta$ is a scalar operator which preserves each summand, we have all crucial information encoded in diagrams like this:
$$
\xymatrix@C=8pt@R=14pt{
                              &(+,-)\ar[rrrr]\ar[drrr]\ar[ddrrrr]& & & &(+,-)\ar[rrrr]\ar[drrr]\ar[ddrrrr]& & &  &(+,-)\\
(+,+)\ar[rrrr]\ar[ddrrrr]\ar[urrrrr]& & & &(+,+)\ar[rrrr]\ar[ddrrrr]\ar[urrrrr]& & & &(+,+)&  \\
                              &(-,-)\ar[rrrr]\ar[drrr]\ar[uurrrr]& & & &(-,-)\ar[rrrr]\ar[drrr]\ar[uurrrr]& & &  &(-,-)\\
(-,+)\ar[rrrr]\ar[uurrrr]\ar[urrrrr]& & & &(-,+)\ar[rrrr]\ar[uurrrr]\ar[urrrrr]& & & &(-,+)&  \\    
}
$$
Each square with nodes $(++),(+-),(-+)$ and $(--)$ hereby represents the $n$-dimensional hypercube of summands and the arrows describe all nonzero twistor (and HSD) operators in the decomposition of the twisted Dirac operator. \\
\noindent
For $\kappa,\iota \in \Lambda_\lambda$ let $r_{\iota}$ denote the restriction to the summand $\mS_{\iota}\in \mV_\lambda \otimes \mS$ and $p_\kappa$ the projection to $\mS_{\kappa}$ in the same diagram. If we then choose the normalization of the operator $\TW{\kappa}{\iota}$ in such a way that $\TW{\kappa}{\iota}=p_\kappa \circ D \circ r_\iota$, the scalar operator $p_{\kappa} \circ D^2 \circ r_\iota$ gives rise to three operator identities. First of all, for $\kappa=\iota$ we get that
\begin{equation}\label{splitlaplace}
-\LA{\kappa}=\mcR_{\kappa}^2 + \sum_{\substack{\omega \in \Lambda_\lambda \\ |\kappa,\omega|=1}} \TW{\kappa}{\omega} \TW{\omega}{\kappa}\ .
\end{equation}
There are precisely $n$ terms in the sum, but some of them can be zero (when $\omega$ is not dominant). Secondly, for $|\kappa,\iota|=1$ we get
\begin{equation}\label{moverarita}
\TW{\kappa}{\iota}\mcR_{\iota} + \mcR_{\kappa}\TW{\kappa}{\iota} = 0\ .
\end{equation}
Finally, for $|\kappa,\iota|=2$ we have that
\begin{equation}\label{moveturn}
\TW{\kappa}{\theta}\TW{\theta}{\iota} + \TW{\kappa}{\omega}\TW{\omega}{\iota} = 0\ ,
\end{equation}
with $\theta$ and $\omega$ the only two weights in $\Lambda_\lambda$ on a distance 1 from both $\kappa$ and $\iota$. Summands with $|\kappa,\iota|>2$ do not give further relations.\\
\noindent
Let us consider a specific path $(\lambda_i)_0^N$ between $\mu$ and $\nu$, $\mu \succ \nu$, going through the weights:
\begin{align*}
\lambda_0&=(\nu_1,\nu_2,\nu_3,\ldots,\nu_{n-1},\nu_n) \\
\lambda_{i_1}&=(\mu_1,\nu_2,\nu_3,\ldots,\nu_{n-1},\nu_n) \\
\lambda_{i_2}&=(\mu_1,\mu_2,\nu_3,\ldots,\nu_{n-1},\nu_n) \\
&\vdots \\
\lambda_{i_{n-1}}&=(\mu_1,\mu_2,\mu_3,\ldots,\mu_{n-1},\nu_n) \\
\lambda_{N}&=(\mu_1,\mu_2,\mu_3,\ldots,\mu_{n-1},\mu_n),
\end{align*}
where $0\equiv i_0 \leq i_1\leq i_2 \leq \ldots i_{n-1} \leq i_n \equiv N$. These weights determine the path uniquely, since between any successive pair of them there is a unique path. The sequence $\{Ch(i)\}_1^N$ is non-decreasing and increases at each $i_k$, $k \in \{1,\ldots,n-1\}$. For a path $(\omega_i)_0^N$ from $\nu$ to $\mu$ different from the given path $(\lambda_i)_0^N$, there is always an index $j \in \{1,\ldots,N\}$ such that $Ch(j)<Ch(j-1)$. The path $(\omega_i)_0^N$ can then be deformed, by changing 
$$\omega_{j-1}=\omega_{j-2}+\eps_{Ch(j-1)}=\omega_{j}-\eps_{Ch(j)}$$ 
into a new weight $\theta$, given by
$$\theta:=\omega_{j-2}+\eps_{Ch(j)}\equiv \omega_{j}-\eps_{Ch(j-1)}\ .$$
Using operator identity (\ref{moveturn}), we then have that
\begin{equation*}
\TW{\omega_{N}}{\omega_{N-1}} \TW{\omega_{N-1}}{\omega_{N-2}} \ldots \TW{\omega_{j}}{\omega_{j-1}}\TW{\omega_{j-1}}{\omega_{j-2}}\ldots \TW{\omega_{1}}{\omega_{0}}
\end{equation*}
equals
\begin{equation*}
-
\TW{\omega_{N}}{\omega_{N-1}} \TW{\omega_{N-1}}{\omega_{N-2}} \ldots \TW{\omega_{j}}{\theta}\TW{\theta}{\omega_{j-2}}\ldots \TW{\omega_{1}}{\omega_{0}}\ .
\end{equation*}
In such a way it is clearly possible to deform any path from $\nu$ to $\mu$ into $(\lambda_i)_0^N$ and the resulting path operators differ only by a multiple coming from the signs and the way how different twistor operators are normalized. An analogous argument works for path operators defined by a reverse path. \\
\noindent
We will now choose the normalization of the twistor operators in such a way that $\PA{\mu}{\nu}$ will no longer depend on the path. Let us therefore write a weight $\mu$ as $(\mu_1,\mu_2,\ldots,\mu_m,0,\ldots,0)$, with $\mu_m > 0$ for $m\in\{1,\ldots,n\}$. We then define $\PA{\mu}{0}$ as the composition of twistor operators along the unique path through
\begin{align*}
\lambda_0&=(0,0,0,\ldots,0) \\
\lambda_{i_1}&=(\mu_1,0,0,\ldots,0) \\
\lambda_{i_2}&=(\mu_1,\mu_2,0,\ldots,0) \\
&\vdots \\
\lambda_{N}&=(\mu_1,\ldots,\mu_m,0,\ldots,0)\ ,
\end{align*}
and choose an arbitrary normalization for $\PA{\mu}{0}$ along this preferred path. For any index $p<m$, we can now normalize the twistor operator  $\TW{\mu+\eps_p}{\mu}$ by the requirement $\PA{\mu+\eps_p}{0}=\TW{\mu+\eps_p}{\mu}\PA{\mu}{0}$. This makes the complete diagram of twistor operators commutative and also chooses the normalization for any path operator $\PA{\mu}{\nu}$. The same way we can normalize path operators for reverse paths.\\
\noindent
If $(\lambda)_0^N$ is a path from $\nu$ to $\mu$ then by the relation (\ref{moverarita}) we get that
\begin{equation*}
\mcR_{\mu}\PA{\mu}{\nu}= - \TW{\mu}{\lambda_{N-1}}\mcR_{\lambda_{N-1}}\PA{\lambda_{N-1}}{\nu},
\end{equation*}
and continuing this process we establish the second assertion up to a multiple. An arbitrary normalization of $\mcR_{0}$ then gives us a consistent choice of overall normalization of other operators $\mcR_{\mu}$ by requiring that
\begin{equation*}
\mcR_{\mu}\PA{\mu}{0}=\PA{\mu}{0}\mcR_{0}\ .
\end{equation*}
\noindent
To prove the third assertion, we need to show that for any $\lambda \prec \mu$ such that $\lambda \notin B(\mu)$, the operator $\PA{\mu}{\lambda}$ can be expressed via a path in which a composition of two specific operators is zero. If $\lambda \notin B(\mu)$ then for some $i\in\{1,\ldots,n-1\}$ one has that $\lambda_i<\mu_{i+1}$ and $\mu_{i+1}>\mu_{i+2}$ (with the additional definition $\mu_{n+1}:=0$). We choose the path from $\lambda$ to $\mu$ as a succession of paths from $\lambda$ to $\lambda_-$, then the path $(\lambda_-,\lambda_0,\lambda_+)$ and then from $\lambda_+$ to $\mu$, where
\begin{align*}
\lambda_-&=(\mu_1,\mu_2,\ldots,\mu_{i-1},\mu_{i+1}-1,\mu_{i+1}-1,\mu_{i+2},\ldots,\mu_{n-1},\mu_n) \\
\lambda_0&=(\mu_1,\mu_2,\ldots,\mu_{i-1},\mu_{i+1},\mu_{i+1}-1,\mu_{i+2},\ldots,\mu_{n-1},\mu_n) \\
\lambda_+&=(\mu_1,\mu_2,\ldots,\mu_{i-1},\mu_{i+1},\mu_{i+1},\mu_{i+2},\ldots,\mu_{n-1},\mu_n).
\end{align*}
We see that all three weights are dominant and $\lambda_0 \in B(\mu)$, since $\mu_{i} \geq \mu_{i+1} \geq \mu_{i+1}$, $\mu_{i+1} \geq \mu_{i+1}-1 \geq \mu_{i+2}$ and the other coordinates are the same as in $\mu$ itself. Similarly $\lambda_+ \in B(\mu)$. As $\lambda_-\succ \lambda$ and $\mu \succ \lambda_+$, the path operators $\PA{\lambda_-}{\lambda}$, $\PA{\mu}{\lambda_+}$ are well defined and
\begin{equation*}
\PA{\mu}{\lambda}=\PA{\mu}{\lambda_+}\TW{\lambda_+}{\lambda_0}\TW{\lambda_0}{\lambda_-}\PA{\lambda_-}{\lambda}
\end{equation*}
Now a weight
\begin{equation*}
\kappa:=(\mu_1-1,\mu_2-1,\ldots,\mu_{i-1}-1,\mu_{i+1}-1,\mu_{i+1}-1,\ldots,\mu_{n-1},\mu_n)
\end{equation*}
is still dominant and the weights $\lambda_-$, $\lambda_0$, $\lambda_+$ are sums of $\kappa$ with
\begin{align*}
\omega_-&=(1,1,\ldots,1,0,0,0,\ldots,0,0) \\
\omega_0&=(1,1,\ldots,1,1,0,0,\ldots,0,0) \\
\omega_+&=(1,1,\ldots,1,1,1,0,\ldots,0,0),
\end{align*}
respectively, i.e. the representations $\mS_{\lambda_-}$, $\mS_{\lambda_0}$, $\mS_{\lambda_+}$ are Cartan products of $\mS_\kappa$ with the representations $\mV_{\omega_-}$, $\mV_{\omega_0}$, $\mV_{\omega_+}$. The sequence of first order invariant differential operators
\begin{equation*}
\mcV_{\omega_-} \rightarrow \mcV_{\omega_0} \rightarrow \mcV_{\omega_+}
\end{equation*}
is in fact part of the de Rham complex and so the composition is zero. If we twist this sequence by $\mS_\kappa$, the composition operator is still zero and so is its restriction and projection to the Cartan parts $\mcS_{\lambda_-}$ and $\mcS_{\lambda_+}$ respectively. Since the only weight $\theta \in \Lambda_d$ such that $|\theta,\lambda_+|=1$ and $|\theta,\lambda_-|=1$ is $\lambda_0$, the twisted de Rham operator restricted on $\mcS_{\lambda_-}$ an projected to $\mcS_{\lambda_+}$ must be a composition of twistor operators
\begin{equation*}
\mcS_{\lambda_-} \rightarrow \mcS_{\lambda_0} \rightarrow \mcS_{\lambda_+},
\end{equation*}
which is up to a multiple $\TW{\lambda_+}{\lambda_0}\TW{\lambda_0}{\lambda_-}$. Hence $\PA{\mu}{\lambda}=0$. A similar argument shows that $\PA{\lambda}{\mu}=0$.
\end{proof}
\begin{theorem}
Let $\mu \in \Lambda_d$, then for $n>\mu_1$
\begin{equation*}
\LA{\mu}^n=\mcR_{\mu}\left[\sum_{\lambda \in B(\mu)} c(\mu,\lambda) \PA{\mu}{\lambda} \LA{\lambda}^{n-|\mu,\lambda|-1} \PA{\lambda}{\mu} \right]\mcR_{\mu},
\end{equation*}
where $c(\mu,\lambda)$ are constants.
\end{theorem}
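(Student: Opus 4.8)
The plan is to derive the formula by directly iterating the split‑Laplace identity, with no separate induction on the weight: the ``box'' $B(\mu)$ will come out as the set of weights the iteration reaches, and the exponents $n-|\mu,\lambda|-1$ as the power of $\LA{\lambda}$ remaining when the iteration is stopped.

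First I would pin down the right form of \eqref{splitlaplace}. Applied with $\kappa=\mu'$ and with the ambient representation $\mV_\lambda\otimes\mS$ chosen to be $\mV_\mu\otimes\mS$ itself, every $\omega\in\Lambda_\mu$ with $|\mu',\omega|=1$ is of the form $(\mu-\epsilon_i)'$, so \eqref{splitlaplace} reads $-\LA{\mu}=\mcR_\mu^2+\sum_i\TW{\mu}{\mu-\epsilon_i}\TW{\mu-\epsilon_i}{\mu}$, the sum over those $i$ with $\mu-\epsilon_i$ dominant. Rewritten in the one global normalization supplied by Lemma~\ref{pathops}, this becomes $\LA{\mu}=-c_\mu\mcR_\mu^2-\sum_i c_{\mu,i}\TW{\mu}{\mu-\epsilon_i}\TW{\mu-\epsilon_i}{\mu}$ for nonzero constants $c_\mu,c_{\mu,i}$; I will call this $(\star_\mu)$, and the same identity with $\mu$ replaced by any dominant $\rho$ will be used repeatedly.

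Next I would iterate $(\star)$. Since each $\LA{\rho}$ is a scalar operator it commutes with every $\mcR_\rho$ and every twistor operator, and $\TW{\mu}{\mu-\epsilon_i}\LA{\mu-\epsilon_i}=\LA{\mu}\TW{\mu}{\mu-\epsilon_i}$; substituting $(\star_\mu)$ for one factor of $\LA{\mu}$ yields $\LA{\mu}^n=-c_\mu\mcR_\mu^2\LA{\mu}^{n-1}-\sum_i c_{\mu,i}\TW{\mu}{\mu-\epsilon_i}\LA{\mu-\epsilon_i}^{n-1}\TW{\mu-\epsilon_i}{\mu}$. Applying $(\star_{\mu-\epsilon_i})$ to each $\LA{\mu-\epsilon_i}^{n-1}$, and so on, produces after $d$ steps terms indexed by chains $\mu=\rho^{(0)}\succ\rho^{(1)}\succ\cdots\succ\rho^{(d)}=\rho$ of Manhattan step length $1$: the ``closed'' ones, of shape (up‑twistors $\mu\!\leftarrow\!\rho$)$\,\mcR_\rho^2\LA{\rho}^{n-d-1}\,$(down‑twistors $\rho\!\leftarrow\!\mu$), and the ``open'' ones still carrying $\LA{\rho}^{n-d}$ which are expanded further. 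By Lemma~\ref{pathops}(i) the composed twistors along a monotone chain equal $\PA{\mu}{\rho}$ and $\PA{\rho}{\mu}$ up to a constant, so a closed term is $\mathrm{const}\cdot\PA{\mu}{\rho}\mcR_\rho^2\LA{\rho}^{n-d-1}\PA{\rho}{\mu}$; and by Lemma~\ref{pathops}(ii) together with $[\LA{\rho},\mcR_\rho]=0$ one gets $\PA{\mu}{\rho}\mcR_\rho^2\LA{\rho}^{n-d-1}\PA{\rho}{\mu}=\mcR_\mu\bigl(\PA{\mu}{\rho}\LA{\rho}^{n-d-1}\PA{\rho}{\mu}\bigr)\mcR_\mu$, already in the required outer form. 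If some $\rho^{(j)}$ is not dominant the corresponding twistor — hence the term — vanishes; if $\rho\notin B(\mu)$ then $\PA{\mu}{\rho}=0$ by Lemma~\ref{pathops}(iii), and since $B(\mu)$ is closed under passing to larger weights along chains below $\mu$, the whole subtree beneath such a $\rho$ contributes zero as well. Thus the iteration effectively visits only the finite set $B(\mu)$; and because $n>\mu_1$ while $|\mu,\rho|\le\mu_1$ for every $\rho\in B(\mu)$, each such $\rho$ is reached with remaining exponent $n-|\mu,\rho|\ge1$, so $(\star_\rho)$ may be applied there and no negative power of any $\LA{\rho}$ ever appears; the process terminates. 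Collecting the closed terms over all $\rho\in B(\mu)$ — several chains reaching the same $\rho$ merge into a single constant $c(\mu,\rho)$ — and using $|\mu,\rho^{(j)}|=j$ gives exactly $\mcR_\mu\bigl[\sum_{\rho\in B(\mu)}c(\mu,\rho)\PA{\mu}{\rho}\LA{\rho}^{n-|\mu,\rho|-1}\PA{\rho}{\mu}\bigr]\mcR_\mu$.

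The step I expect to be the main obstacle is the bookkeeping concentrated in the first and third paragraphs: reconciling the ambient‑dependent normalization in which \eqref{splitlaplace} is stated (the normalization $\TW{\kappa}{\iota}=p_\kappa\circ D\circ r_\iota$ attached to a specific $\mV_\lambda\otimes\mS$) with the single global normalization of Lemma~\ref{pathops}, so that the constants accumulated down the recursion really do assemble into well‑defined $c(\mu,\rho)$; and the purely combinatorial verification that, under the ``descending'' choice of ambient, the reachable weight set is precisely $B(\mu)$ and the exponents land exactly on $n-|\mu,\rho|-1$ — in particular that $n>\mu_1$ is the sharp hypothesis making every box weight still expandable by $(\star)$. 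Once those are settled, the rest is the routine composition‑of‑operators computation sketched above.
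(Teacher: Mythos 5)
Your proposal is correct and follows essentially the same route as the paper: iterate the split-Laplace identity \eqref{splitlaplace} descending from $\mu$, use Lemma~\ref{pathops}(i)/(ii) to collapse the accumulated twistor compositions into path operators and pull the HSD operators out to the flanks, and invoke Lemma~\ref{pathops}(iii) together with $n>\mu_1$ to discard every term whose weight leaves $B(\mu)$. The only difference is presentational: the paper organizes the expansion by grouping terms at each fixed distance $|\mu,\lambda|$ from $\mu$, whereas you track individual descending chains, but the computation, the constants absorbing normalizations, and the termination argument are the same.
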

\begin{proof}
Relation (\ref{splitlaplace}) for the twisted Dirac operator on $\mV_\mu\otimes\mS$ gives:
\begin{equation*}
\LA{\mu}^n=c(\mu,\mu)\mcR_{\mu}\LA{\mu}^{n-1}\mcR_{\mu} + \sum_{\substack{\lambda \prec \mu \\ |\mu,\lambda|=1}} c_1(\mu,\lambda) \TW{\mu}{\lambda}\LA{\lambda}^{n-1} \TW{\lambda}{\mu},
\end{equation*}
where we have to include constants $c(\mu,\mu)$ and $c_1(\mu,\lambda)$ to compensate for our choice of normalization of the RS and twistor operators, and for the sign included in relation (\ref{splitlaplace}). If we expand the Laplace operators in the terms of the summation over $\lambda$ in a similar way, we get:
\begin{align*}
\LA{\mu}^n&=c(\mu,\mu)\mcR_{\mu}\LA{\mu}^{n-1}\mcR_{\mu} \\ &+ \sum_{\substack{\lambda \prec \mu \\ |\mu,\lambda|=1}} c(\mu,\lambda) \PA{\mu}{\lambda}\mcR_{\lambda}\LA{\lambda}^{n-2}\mcR_{\lambda} \PA{\lambda}{\mu} +  \sum_{\substack{\lambda \prec \mu \\ |\mu,\lambda|=2}} c_2(\mu,\lambda) \PA{\mu}{\lambda}\LA{\lambda}^{n-2} \PA{\lambda}{\mu}
\end{align*}
We can now use the second part of Lemma \ref{pathops} to switch the HSD and path operators in the first sum and we can then expand the Laplace operators in the second sum again. If we continue plugging in and expanding, we finally arrive at
\begin{equation*}
\LA{\mu}^n=\mcR_{\mu}\left[\sum_{\substack{\lambda \prec \mu \\ |\mu,\lambda| < n}} c(\mu,\lambda) \PA{\mu}{\lambda} \LA{\lambda}^{n-|\mu,\lambda|-1} \PA{\lambda}{\mu} \right]\mcR_{\mu} + 
\sum_{\substack{\lambda \prec \mu \\ |\mu,\lambda| = n}} c_n(\mu,\lambda) \PA{\mu}{\lambda} \PA{\lambda}{\mu}\ .
\end{equation*}
The second sum is zero since the element $(\mu_2,\mu_3,\ldots,\mu_n,0)$, which lies at maximal distance from $\mu \in B(\mu)$, has distance $\mu_1$ and $n > \mu_1$. A similar argument allows us to  keep only the terms corresponding to $\lambda \in B(\mu)$ in the first sum.
\end{proof}
\noindent
As a result, we have the following conclusion, which generalizes the fact that all monogenic functions in the kernel of $\up_x$ are harmonic: 
\begin{corollary}
Given a dominant integral weight $\mu \in \Lambda_d$ and a function $f(\ux) \in \mcS_\lambda$, we have that  
$$\mcR_\lambda f(\ux) = 0\ \Rightarrow\ \Delta^{\lambda_1 + 1}f(\ux) = 0\ .$$
\end{corollary}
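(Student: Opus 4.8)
The plan is to obtain the Corollary as an immediate specialization of the Theorem just proved. Writing $\lambda$ for the weight in the statement (the $\mu$ and $\lambda$ there being the same weight), I would take $n = \lambda_1 + 1$. Since $\lambda$ is dominant integral with largest part $\lambda_1$, the hypothesis $n > \lambda_1$ of the Theorem is satisfied, and it gives
\begin{equation*}
\LA{\lambda}^{\lambda_1+1} = \mcR_{\lambda}\left[\sum_{\omega \in B(\lambda)} c(\lambda,\omega)\, \PA{\lambda}{\omega}\, \LA{\omega}^{\lambda_1 - |\lambda,\omega|}\, \PA{\omega}{\lambda} \right]\mcR_{\lambda},
\end{equation*}
where I have used $n - |\lambda,\omega| - 1 = \lambda_1 - |\lambda,\omega|$.

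Before concluding, I would check that the bracketed expression is a bona fide invariant differential operator, i.e. that each exponent appearing is a non-negative integer. For $\omega \in B(\lambda)$ one has $\omega_i \le \lambda_i$ for every $i$, and the Manhattan distance $|\lambda,\omega| = \sum_i(\lambda_i - \omega_i)$ is largest at the corner $\omega = (\lambda_2,\lambda_3,\ldots,\lambda_n,0)$ of the box, where it equals $\lambda_1$; hence $\lambda_1 - |\lambda,\omega| \ge 0$ throughout, with the usual convention $\LA{\omega}^0 = \Id$ in the extreme case. Denoting the bracketed operator by $A_{\lambda,\lambda_1+1}$ and recalling that $\LA{\lambda}$ is nothing but the ordinary Laplacian $\Delta$ acting componentwise on $\mcS_\lambda$-valued functions, this is exactly the factorization $\Delta^{\lambda_1+1} = \mcR_{\lambda}\, A_{\lambda,\lambda_1+1}\, \mcR_{\lambda}$ announced in the Introduction.

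Then, given $f(\ux) \in \mcS_\lambda$ with $\mcR_\lambda f = 0$, I would simply apply this identity:
\begin{equation*}
\Delta^{\lambda_1+1} f = \mcR_\lambda\big( A_{\lambda,\lambda_1+1}(\mcR_\lambda f)\big) = \mcR_\lambda\big( A_{\lambda,\lambda_1+1}(0)\big) = 0,
\end{equation*}
so $f$ is $(\lambda_1+1)$-polyharmonic. I do not expect a genuine obstacle here: all the work is already contained in the Theorem and, behind it, in Lemma \ref{pathops}; the only points that merit a sentence are the non-negativity of the exponents $\lambda_1 - |\lambda,\omega|$ and the identification of $\LA{\lambda}$ with $\Delta$. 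One may further remark --- though it is not needed for the Corollary --- that the order $\lambda_1+1$ is sharp, as is known explicitly in the cases $\lambda=(k)$ and $\lambda=(k,l)$.
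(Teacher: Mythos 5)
Your proof is correct and is exactly the specialization the paper intends: the Corollary is stated without explicit proof as an immediate consequence of the Theorem, and your choice $n=\lambda_1+1$, together with the check that all exponents $\lambda_1-|\lambda,\omega|$ are non-negative over $B(\lambda)$, is precisely how that consequence is obtained.
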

\noindent
Let us recall that for the cases of $\lambda=(k)$ and $\lambda=(k,l)$, where the kernel of $\mcR_\lambda$ is known explicitly, such polyharmonicity result is a direct consequence of the decomposition of the kernel into a direct sum of special solutions for ``simpler'' HSD operators $\mcR_\mu$. The set of highest weights  $\mu$ that take part in this decomposition is precisely $B(\lambda)$, i.e. it is the same set that appears in the corresponding factorization of the operator $\Delta_\lambda$. This suggests how the structure of the kernel of $\mcR_\lambda$ may look like in general, and indeed, dimension checking in randomly chosen cases supports this conjecture. We will treat this in future work.

\bibliography{clifford}
\end{document}